\definecolor{red-}{rgb}{1.0,0.0,0.0}
\definecolor{blue-}{rgb}{0.0,0.2,1.0}
\definecolor{bluenight-}{rgb}{0.0,0.1,0.55}
\definecolor{green-}{rgb}{0.0, 0.6, 0.0}
\definecolor{gold}{rgb}{0.8,0.7,0.0}
\definecolor{black}{rgb}{0.0,0.0,0.0}
\definecolor{DarkGreen}{rgb}{0.0,0.5,0.0}
\definecolor{LightGreen}{rgb}{0.8,1.0, 0.8}
\definecolor{yellow}{rgb}{0.9,0.9,0.0}
\definecolor{white}{rgb}{1.0,1.0,1.0}
\begin{document}

\title{Cubature rules and expected value of some complex functions}

\author[C. Fassino, E. Riccomagno, M-P Rogantin]{C. Fassino\affil{1}, E. Riccomagno\affil{1}\comma\corrauth, M-P Rogantin\affil{1}}

\address{\affilnum{1}\ Department of Mathematics, University of Genova, Italy}

\emails{{\tt fassino@dima.unige.it}\ (C. Fassino), 
{\tt riccomagno@dima.unige.it}\ (E. Riccomagno),
{\tt rogantin@dima.unige.it}\ (M-P. Rogantin)
}

\begin{abstract}
The expected value of some complex valued random vectors is computed by means of the indicator function of a designed experiment as known in algebraic statistics. 
The general theory is set-up and results are obtained for finite discrete random vectors and the Gaussian random vector. 
The precision space of some cubature rules/designed experiments are determined. 
\end{abstract}

\keywords{Design of experiments, Indicator function, Interpolatory cubature formul\ae, Precision space, Complex functions, Evaluation of expected values}
\ams{13P10, 41A05, 65D32}

\maketitle

\section {Introduction}

Evaluations of integrals is a recurrent task in statistics and probability for example when computing marginal distributions, in  the analysis of contingency tables, when estimating the moments of some known distribution or when evaluating the marginal likelihood integrals in Bayesian inference, spectral analysis of time series~\cite{Brillinger2001} and probability
Mora than in statistics, complex valued random vectors and their integration  find application in many other fields such as electromagnetism and quantum mechanics, 
and largely in  digital communication~\cite{Lapidoth2009} and signal processing (e.g.~\cite{ZhuBlumLin2016} and for a setting similar to ours~\cite{SaylorSmolarski2001}). 
Interestingly the usefulness of complex random vectors has also been argued in actuarial science~\cite{Halliwell2016} besides time series analysis. 
An introduction to the statistical analysis based on complex Gaussian distributions is given in~\cite{Goodman1963} and a recent paper on second order estimation with complex-valued data focused on digital signal processing can be found in~\cite{LangHuemer2017}. 
 
In this paper we address the problem of computing \emph{exactly} the expected value with respect to a generic probability measure $\lambda$, of a complex valued function $g: \mathbb C^k \rightarrow \mathbb C $ of the $k$-variate complex random vector $Z$.  
We do so by using the indicator functions from the algebraic statistics theory of design of experiments.  
The measure $\lambda$ could be discrete or continuous. General results are presented for the discrete case and specific results are given for the multivariate complex Gaussian distributions. 

The expected value of $g$ is approximated  by using interpolatory cubature rules of the form 
\begin{equation} \label{eq1} \int_{\mathbb C^k} g \, d\lambda = \sum_{d \in \mathcal D}  w_d g(d) +R(g) \end{equation}
where $ \mathcal D$ is a finite set giving the cubature nodes.
For us the  coordinates of the cubature nodes   are in suitable subsets  of the  $m$-th roots of the unity.  
The  weights $\{w_d\}_{d \in \mathcal D}$ are obtained  from a vectorial basis of the quotient space $\mathbb C[z_1,\dots,z_k] / I(\mathcal D)$, where  $I(\mathcal D)$ is  the polynomial ideal of $\mathcal D$~\cite{PRW2001}.
Finally $R(g)$ is the error committed when approximating the integral with the finite sum in~(\ref{eq1}). 
Given a set of nodes and weights, it is of interest to determine classes of functions $g$ for which the error is zero. This set is called the precision space of the cubature rule. 
Quadrature rules (i.e. bi-dimensional cubature rules) with complex valued nodes have been studied e.g. in~\cite{Panja2015Mandal}. 
Here we work in a multi-dimensional setting. 

Our work unveils a connection between cubature rules and design of experiments which, to our knowledge, has been unnoticed so far in the literature. 
We find this connection somewhat natural because both in cubature rule theory and design of experiment theory a key point is to determine a suitable finite set of points $\mathcal D$ and their weights $\{w_d\}_{d \in \mathcal D}$ for  achieving some specific task, although this can be different between the two theories and also within them.  
Another common  task to the two theories is, given $\mathcal D$ and $\{w_d\}_{d \in \mathcal D}$, find their range of applicability, e.g. power of estimation, precision space. 

This paper deals with this second task and it does so by the synergic use of tools and techniques from commutative algebra, numerical analysis and algebraic statistics. 
 In particular, some results in~\cite{FaPR2014,FaR2015} are generalised to the complex case.
The link between the above cubature problem and the algebraic statistics theory of fractional factorial design of experiments is made 
through  the representation of a fractional factorial experiment as a polynomial indicator function~\cite{PR2008}.
This is similar to~\cite{FRR2012} which instead unhearthed the connection between Markov bases for contingency tables and design of experiments.

In Section~\ref{Sc:Eq_weights} we focus our attention on the special case with equal weights and we obtain some specific results for the Gaussian density in Section~\ref{Sc:Gauss}.
While in Section~\ref{Sc:Interp_Rule} we provide necessary and sufficient conditions for obtaining such  cubature rules and we analyse their precision space, that is the vector space of polynomials $p$  whose expected value is equal to $\sum_{d \in \mathcal D}  w_dfp(d)$, namely with zero  error $R(p) $. The weights are found in Section~\ref{Sc:Comp_weights}.

\section{Interpolatory rules}\label{Sc:Interp_Rule}
Let $\lambda$ be a measure on $\mathbb C^k$ with finite moments (at least up to a certain degree) and $g$ be a complex integrable function, $g: \mathbb C^k \to \mathbb C$.
Let $\mathcal D\subset \mathbb C^k$ be a set with $n$ elements and let $w \in \mathbb C^k$ be the vector $[w_d]_{d\in \mathcal D}$.

A \emph{cubature rule} $(\mathcal D,w)$ is a formula of the type
  \begin{equation*}
  \int_{\mathbb C^k} g \ d\lambda=  \sum_{d \in \mathcal D} w_d \ g(d) +R_{\mathcal D,w}(g)
\end{equation*}
where the sum provides an approximation to the integral and $R_{\mathcal D,w}$ is the respective error.
The $w_d$'s are called the \emph{weights} and the elements of $\mathcal D$  the \emph{nodes} of the cubature rule.

Let $\mathbb C[z_1,\dots,z_k]$ be the ring of polynomials with complex coefficients in the indeterminates  $z_1,\dots,z_k$ and let $\mathcal P$ be  a set of polynomials contained in $\mathbb C[z_1,\dots,z_k]$. A cubature rule $(\mathcal D,w)$ is  \emph{exact} for $\mathcal P$ 
 if for all elements $p$ of $\mathcal P$
\begin{equation*}
\int_{\mathbb C^k} p \ d\lambda=\sum_{d \in \mathcal D} w_d \ p(d)
\end{equation*}
or, equivalently, if  $R_{\mathcal D,w}(p)=0$.

\vspace{3 mm}
A cubature rules $(\mathcal D,w)$ is called  \emph{interpolatory} if it is exact for a set $\mathcal P$ of interpolatory polynomials over $\mathcal D$. This definition generalises the definition of univariate interpolatory quadrature rules, where $\mathcal P$ is the set of univariate polynomials with degree strictly lower than the cardinality of $\mathcal D$, that is the set of the interpolatory polynomials over $\mathcal D$~\cite{Gautschi2004}. 

In this paper, given a set of nodes $\mathcal D$, we only consider sets of polynomials $\mathcal P$ such that,  
for any function $g: \mathbb C^k \to \mathbb C$,  there exists a \emph{unique} interpolatory polynomial $p_{g,\mathcal{D}}\in \mathcal P$  with $g(d)=p_{g,\mathcal{D}}(d)$ for all $d\in \mathcal D$. The pair $(\mathcal D,\mathcal P)$ is  called \emph{correct}.  For instance, the pair $(\mathcal D,\mathcal P)$ is  correct if $\mathcal D=\{d_1,\dots,d_n\}\subset \mathbb C$ and  $\mathcal P=\operatorname{Span}_{\mathbb C}\left( x^\alpha \ | \ \alpha=0,\dots,n-1\right)$.

\vspace{3mm}
Let $\alpha \in \mathbb Z_{\ge 0}^{k}$, let $z^\alpha = z_1^{\alpha_1} \dots z_k^{\alpha_k}$ be a monomial in the indeterminates  $z_1,\dots,z_k$ and let  $\mathbb T = \left\{z^\alpha \ | \ \alpha \in \mathbb Z_{\ge 0}^k   \right\}$
be the set of all monomials.
Let  $S \subset \mathbb T$ be a set of monomials such that each $p \in \mathcal P$ can be expressed as $p=\sum_{s\in S} c_s s$, $c_s \in \mathbb C$, that is $\mathcal P$ is a vectorial space over $\mathbb C$ with basis $S$. We denote $\mathcal P= \operatorname{Span}_{\mathbb C}(S)$.

An interpolatory polynomial $p_g \in \operatorname{Span}_{\mathbb C}(S)$ of a function $g$ over $\mathcal D$ is such that $p_g(d)= g(d)$, that is $\sum_{s\in S} c_s s(d)=g(d) $, for each $d \in \mathcal D$. Denoting by $X_{\mathcal D, S} =[s(d)]_{d\in \mathcal D, s\in S}$ the evaluation matrix of the elements of $S$ at $\mathcal D$ and by $[g(d)]_{d\in \mathcal D}$ the evaluation vector of $g$ at  $\mathcal D$, the coefficient vector $c=[c_s]_{s\in S}$ of the polynomial $p_g$ satisfies the linear system $X_{\mathcal D,S} c =[g(d)]_{d\in \mathcal D}$. If the pair $(\mathcal D, \mathcal P)$ is correct, for each function $g$, that is for each vector $[g(d)]_{d\in \mathcal D}$,  there exists a unique coefficient vector $c$ solution of this linear system, and so the pair is correct if and only if $X_{\mathcal D, S}$ is a square non singular matrix, that is if and only if $\# S = \# \mathcal  D$ and the evaluation vectors $[s(d)]_{d \in \mathcal D}$ are linear independent vectors. 

\vspace{3 mm}
In the following we consider cubature rules $(\mathcal D, w) $ which are interpolatory with respect to a polynomial set $\mathcal P$ such that the pair $(\mathcal D,\mathcal P)$ is  correct. 
A correct pair $(\mathcal D,\mathcal P)$ can  be obtained considering 
$\mathbb C$-vector spaces $\mathcal P$ which are isomorphic to the quotient space  $\mathbb C[z_1,\dots , z_k]/I(\mathcal D)$, that is  considering monomial sets $S$ isomorphic to a basis of the quotient space  $\mathbb C[z_1,\dots , z_k]/I(\mathcal D)$. 
{There exist algebraic algorithms to compute  monomial bases of such a vector space, for instance the Buchberger-M\"oller algorithm~\cite{BuchbergerMoller1982}}.

By definition,  an interpolatory cubature rule $ (\mathcal D,w)$ is exact for each polynomial in $\mathcal P$ but, in general, there exist polynomials $p \notin \mathcal P$ such that $R_{\mathcal D,w}(p)=0$. In order to study the set of these polynomials, we introduce the notions of precision basis and precision space.

 A finite monomial set $\mathcal B_{\mathcal D,w} \subset \mathbb T$ is a \emph{precision basis} for $(\mathcal D,w)$ if
\begin{equation*}
\int_{\mathbb C^k} z^\alpha \ d\lambda =\sum_{d \in \mathcal D} w_d z^\alpha(d) \ \text{ for all } \; z^\alpha \in \mathcal B_{\mathcal D,w}  \ \textrm{and} \
\int_{\mathbb C^k} z^\alpha \ d\lambda \neq  \sum_{d \in \mathcal D} w_d z^\alpha(d) \text{ for all }  z^\alpha \notin \mathcal B_{\mathcal D,w} \ .
\end{equation*}
The precision basis is the largest set of monomials for which $(\mathcal D,w)$ is exact.  
The \emph{precision space} of $(\mathcal D,w)$ is the $\mathbb C$-vector space $\operatorname{Span}(\mathcal B_{\mathcal D,w})$   generated by $\mathcal B_{\mathcal D,w}$. 
In the univariate case, if $\mathcal D$ is a subset of $ \mathbb R$ of cardinality  $n$ and if $\mathcal P$ is the interpolation space is generated  $\{1,x,x^2,\dots,x^{n-1}\}$,  the precision space of  Gaussian quadrature rule is generated by $\mathcal B_{\mathcal D,w}=\{1,x,x^2,\dots,x^{2n-1}\}$~\cite{Gautschi2004}.

In the univariate case, the precision degree of a quadrature rule is the maximal degree of the elements of $\mathcal P$ on which the quadrature rule is exact. Generalising this notion, we define the \emph{precision degree} of $(\mathcal D,w)$ as the $\max_{z^\alpha \in \mathcal B_{\mathcal D,w}} \left\{\sum_{i=1}^k \alpha_i \right\}$.

\section{Weights for points in $\mathbb C$-vector space with basis $S$}\label{Sc:Comp_weights}  

Let $\mathcal D$ be a set of $n$ nodes in $\mathbb C^k$, let $S \subset \mathbb T$ be a set of monomials in $\mathbb C[z_1,\dots,z_k]$  and let $\mathcal P=\operatorname{Span}_{\mathbb C}(S)$ be the $\mathbb C$-vector space of polynomials in $\mathbb C[z_1,\dots,z_k]$ generated by $S$ such that $(\mathcal P, \mathcal D)$ is correct.

The following proposition gives the vector of weights $w_S$ that makes $(\mathcal D,w)$ exact on $\mathcal P$, 
{that is the weights $w_S$ of  the interpolatory cubature rule on $\mathcal P$}.

\begin{proposition} \label{pr:int-cub-r}
  Let $\mathcal P$ be a $\mathbb C$-vector space with basis $S$ with $\#S = \# \mathcal D$, and let $(\mathcal D,\mathcal P)$ be correct. Let  $X_{\mathcal D,S}=[s(d)]_{d \in \mathcal D,s \in S}$  be the evaluation matrix of the elements of  $S$ over $\mathcal D$.
  The cubature rule $(\mathcal D,w_S)$ is exact on $\mathcal P$ if and only if
$$
w_S=\left(X_{\mathcal D,S}^t\right)^{-1} \left[\int_{\mathbb C^k} s \ d\lambda \right]_{s\in S} \ .
$$
Furthermore, the weights $w_S$ are unique.
\end{proposition}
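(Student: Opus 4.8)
The plan is to reduce the whole statement to a single square linear system and then invoke the non-singularity of the evaluation matrix that correctness provides. The characterization and the uniqueness should then fall out simultaneously.

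First I would use linearity to cut the problem down to a finite set of conditions. Every $p \in \mathcal P$ is a $\mathbb C$-linear combination $p = \sum_{s \in S} c_s s$, and both the integral $\int_{\mathbb C^k} p \, d\lambda$ and the sum $\sum_{d \in \mathcal D} w_d p(d)$ are $\mathbb C$-linear in $p$. Consequently $(\mathcal D, w)$ is exact on the whole space $\mathcal P$ if and only if it is exact on each basis monomial, i.e. $\int_{\mathbb C^k} s \, d\lambda = \sum_{d \in \mathcal D} w_d \, s(d)$ for every $s \in S$. This replaces the a priori infinite family of exactness conditions by exactly $\#S = \#\mathcal D = n$ scalar equations.

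Next I would cast these $n$ equations in matrix form. For a fixed $s$, the quantity $\sum_{d \in \mathcal D} w_d \, s(d)$ is the inner product of the weight vector $w = [w_d]_{d \in \mathcal D}$ with the column $[s(d)]_{d \in \mathcal D}$ of $X_{\mathcal D, S}$, which is precisely the $s$-th entry of the matrix--vector product $X_{\mathcal D, S}^t \, w$. Hence the entire family of exactness conditions is equivalent to the single linear system $X_{\mathcal D, S}^t \, w = \left[\int_{\mathbb C^k} s \, d\lambda\right]_{s \in S}$. Because correctness of $(\mathcal D, \mathcal P)$ was shown in the excerpt to be equivalent to $X_{\mathcal D, S}$ being square and non-singular, its transpose is invertible as well; multiplying on the left by $(X_{\mathcal D, S}^t)^{-1}$ then yields the stated formula for $w_S$, and non-singularity simultaneously forces this solution to be the only one, delivering uniqueness in the same step.

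I do not expect a genuine obstacle, since the argument is a faithful translation of exactness into linear algebra. The only point demanding care is the bookkeeping of the transpose and the row/column indexing of $X_{\mathcal D, S}$: one must check that summing over the nodes $d$ corresponds to the correct side of the matrix product, so that the evaluation conditions indexed by $s \in S$ match the rows of $X_{\mathcal D, S}^t$ rather than of $X_{\mathcal D, S}$ itself.
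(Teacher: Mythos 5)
Your proposal is correct and rests on the same core argument as the paper's proof: exactness is a linear condition in $p$, and the whole problem reduces to the square system $X_{\mathcal D,S}^t\, w = \left[\int_{\mathbb C^k} s \, d\lambda\right]_{s\in S}$, solved uniquely because correctness makes $X_{\mathcal D,S}$ nonsingular. The only organizational difference is that you restrict to the basis monomials at the outset, whereas the paper works with an arbitrary $p\in\mathcal P$ and concludes via a residual vector $\rho$ orthogonal to all evaluation vectors (hence to the columns of $X_{\mathcal D,S}$, hence zero); your version is a slightly more direct packaging of the same linear algebra.
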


\begin{proof}
 Each  $p \in \mathcal P$ can be written uniquely as
  $$
  p=\sum_{s\in S} c_s s \ , \textrm{ for } c_s \in \mathbb C \ ,
  $$
and so
  $$
  [p(d)]_{d \in \mathcal D}=X_{\mathcal D,S} [c_s]_{s \in S} \ ,
  $$
  that is 
   \begin{equation*}
[c_s]_{s \in S}= X_{\mathcal D,S}^{-1} [p(d)]_{d \in \mathcal D} \ .
  \end{equation*}
It follows that, for each $p \in \mathcal P$,  
  \begin{align*}
   \int_{\mathbb C^k} p\ d\lambda&= \int_{\mathbb C^k} \sum_{s\in S} c_s\ s \ d\lambda= \sum_{s\in S} c_s \int_{\mathbb C^k} s \ d\lambda=\left[\int_{\mathbb C^k} s \ d\lambda \right]_{s\in S}^t \left[c_s\right]_{s \in S} \\ & =
\left[\int_{\mathbb C^k} s \ d\lambda \right]_{s\in S}^t \ X_{\mathcal D,S}^{-1} \ \left[p(d)\right]_{d \in \mathcal D}
  \end{align*}

The cubature rule $(\mathcal D,w_S)$ is exact on $\mathcal P$ if and only if
$\int_{\mathbb C^k} p\ d\lambda=w_S^t \left[p(d)\right]_{d \in \mathcal D}$ for each $p \in \mathcal P$, 
that is if and only if 
$$ \left[\int_{\mathbb C^k} s \ d\lambda \right]_{s\in S}^t \ X_{\mathcal D,S}^{-1} \ \left[p(d)\right]_{d \in \mathcal D} = 
 w_S^t \left[p(d)\right]_{d \in \mathcal D}$$
or, equivalently,  
  \begin{equation*}
  w_S= \left(X_{\mathcal D,S}^t \right)^{-1}  \ \left[\int_{\mathbb C^k} s \ d\lambda \right]_{s\in S} +\rho
  \end{equation*}
  where $\rho$ is orthogonal to each evaluation vector $\left[p(d)\right]_{d \in \mathcal D} $. In particular, $\rho$ is orthogonal to $\left[s(d)\right]_{d \in \mathcal D} $, $ s\in S$, that is to the columns of $X_{\mathcal D,S}$. Since $X_{\mathcal D,S}$ is a square non singular matrix, $\rho $ is the null vector and so  the 
  {vector $w_S$ } of the weights of the cubature rule $(\mathcal D, \operatorname{Span}(S))$ is unique.
\end{proof}

\begin{remark}
The weights $w_S$ do not change, if a different basis $T$ for the $\mathbb C$-vector space {$\mathcal P=\operatorname{Span}(S)$} is chosen. Each monomial $t \in T$ can be expressed as $t=\sum_{s \in S} m_{t,s} s$, and so, denoting by $M=[m_{t,s}]_{t \in T, s \in S}$, we have, by linearity,  
  $$
  \left[\int_{\mathbb C^k} t \ d\lambda \right]_{t \in T}=M\left[\int_{\mathbb C^k} s \ d\lambda \right]_{s\in S}\ .
  $$
Moreover, the evaluation matrix $X_{\mathcal D,T}$ of the elements of $T$ over $\mathcal D$ can be written as $X_{\mathcal D,T}=X_{\mathcal D,S}M^t$.  
Let $w_T$ be the weights computed using $\mathcal D$ and $T$. From Proposition~\ref{pr:int-cub-r} we have
  \begin{align*}
  w_T & =  \left(X_{\mathcal D,T}^t\right)^{-1}  \ \left[\int_{\mathbb C^k} t \ d\lambda \right]_{t\in T} 
 = \left( M X_{\mathcal D,S}^t \right)^{-1} M \left[\int_{\mathbb C^k} s \ d\lambda \right]_{s\in S} \\ 
 & = \left(X_{\mathcal D,S}^t \right)^{-1} \left[\int_{\mathbb C^k} s \ d\lambda \right]_{s\in S} = w_S\ .
  \end{align*}
  \end{remark}

\section{$\mathbb C$-Fractional factorial designs}
 In this section we consider interpolatory cubature rules with set of nodes $\mathcal D$, whose elements are $k$-uple of $m$-th roots of the unit, and with interpolatory space $\mathcal P$, the $\mathbb C$-vector space generated by a monomial set $S \subset \mathbb T$, such that the pair $(\mathcal D ,\mathcal P)$ is correct. 

We briefly recall some topics about of the roots of the unity. Let $m \in \mathbb Z_{\ge0}$ and   $\Omega_m=\{\omega_0,\dots, \omega_{m-1}\}$ be the set of the $m$-th roots of the unity, $\omega_j=\exp (-\mathbf{i} (m/2\pi)j)$, where $\mathbf{i}=\sqrt{-1}$ is the imaginary unity.

Denoting, for $j \in \mathbb Z$, by  $[j]_m$  the residue of $j \mod m$ and  by  $\overline j$ the class $[m-j]_m$ we have that, given   $c \in \mathbb Z$ and $\omega_j, \omega_i \in \Omega$,   it holds $\omega_j^c = \omega_{[c j]_m}$, $\omega_i\omega_j=\omega_{[i+j]_m}$, and the complex  conjugate of $\omega_j$ is  $\overline{\omega_j}=\omega_{\overline j}$. Furthermore, we denote by $ \mathbb Z_m$  the  set of all congruence classes of the integers for a modulus $m$, and by $\mathbb Z_m^k$  its cartesian product. 

We consider a  set of $n$ nodes $\mathcal D$ contained in  $\Omega^k_m \subset \mathbb C^k$. Let $f$ be the indicator function of $\mathcal D$ over $\Omega_m^k$, defined as  $f(d)=1$, for $d \in \mathcal D$, and  $f(d)=0$, for $d \in \Omega_m^k\setminus \mathcal D$.

Let  $S_m=\{z^\alpha: \  \alpha \in \mathbb Z_m^k\}$ be the monomial basis of {the} $\mathbb C$-vector space which is isomorphic to the quotient space  $\mathbb C[z_1,\dots , z_k]/I(\Omega_m^k)$.
 As presented in~\cite{PR2008}, by interpolating the values $[f(d)]_{d \in \Omega_m^k}$ with polynomials in $\operatorname{Span}(S_m)$, we obtain a representation of the indicator function $f$ as follows
 \begin{equation}\label{ind}
f = \sum_{\alpha \in \mathbb Z_m^k} b_\alpha z^\alpha \qquad \textrm{where} \quad b_\alpha = \frac 1 {m^k} \sum_{d \in {\mathcal D}} z^{\overline \alpha}(d) 
\end{equation}
and, since  $\alpha= [\alpha_1, \dots, \alpha_k]\in \mathbb Z_m^k$, $\overline \alpha= [m-\alpha_1, \dots, m-\alpha_k] $.


Let $(\mathcal D, w_S)$ be the cubature rule with nodes $\mathcal D\subset \Omega_m^k$, $S\subset S_m$  and weights $w_S = (X_{\mathcal D,S}^t)^{-1} \left[ \int_{\mathbb C^k} s \ d\lambda \right]_{s\in S}$.


In the next we consider  non-negative and normalized weights, i.e. such that $w_S^t1_n =1$, where $1_n$ is a $n$-vector with elements equal to $1$, and the weights are all equal, that is $w_S= \frac1 n 1_n$.


\subsection{Equal weights} \label{Sc:Eq_weights}
If the weights  are non-negative, normalized and all equal, then $w = \frac 1 n {1_n}$. 
The following theorem characterises a cubature rule with equal weights using the indictor function of $\mathcal D$.

\begin{theorem}\label{teo_pesi_uguali}
Let $\mathcal D \subset \Omega_m^k$ be a set of $n$ nodes and let $f$ its indicator function, as in Eq. \eqref{ind}. Let $S \subset S_m$ be a monomial set such that the pair $(\mathcal D, \operatorname{Span}(S))$ is correct 
and let $w_S$ as in Proposition \ref{pr:int-cub-r}.

Let $A$ be the set:
$$ A=\left \{ z^\alpha \in\mathbb T, \  \alpha \in \mathbb Z^k_{\ge 0}  \ \left|  \ \int_{\mathbb C^k} z^\alpha \ d\lambda = \frac {m^k} n \ b_{\overline{\alpha}}\right. \right\} \ .$$

Then
$w_S= \frac 1 n 1_n$  if and only if $ S \subseteq A$.
\end{theorem}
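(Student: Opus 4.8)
The plan is to reduce the characterization to a direct componentwise comparison, using the explicit formula for $w_S$ from Proposition~\ref{pr:int-cub-r} together with the coefficients $b_\alpha$ of the indicator function in~\eqref{ind}.

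First I would invoke Proposition~\ref{pr:int-cub-r}: since $(\mathcal{D},\operatorname{Span}(S))$ is correct, $X_{\mathcal{D},S}$ is square and nonsingular and $w_S$ is the \emph{unique} vector satisfying $X_{\mathcal{D},S}^t w_S = \left[\int_{\mathbb{C}^k} s \, d\lambda\right]_{s\in S}$. By this uniqueness, $w_S = \frac{1}{n}1_n$ holds if and only if $\frac{1}{n}1_n$ solves the same linear system, i.e. $X_{\mathcal{D},S}^t \frac{1}{n}1_n = \left[\int_{\mathbb{C}^k} s \, d\lambda\right]_{s\in S}$. Reading this equality row by row, and noting that the row of $X_{\mathcal{D},S}^t$ indexed by $s=z^\alpha$ is the evaluation vector $[z^\alpha(d)]_{d\in\mathcal{D}}$, the condition becomes: for every $z^\alpha \in S$,
\[ \frac{1}{n}\sum_{d\in\mathcal{D}} z^\alpha(d) = \int_{\mathbb{C}^k} z^\alpha \, d\lambda . \]

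Next I would rewrite the node sum in terms of the indicator coefficients. From~\eqref{ind}, $b_\beta = \frac{1}{m^k}\sum_{d\in\mathcal{D}} z^{\overline\beta}(d)$; applying this with $\beta=\overline\alpha$ and using $\overline{\overline\alpha}=\alpha$ in $\mathbb{Z}_m^k$ gives $b_{\overline\alpha} = \frac{1}{m^k}\sum_{d\in\mathcal{D}} z^\alpha(d)$, hence $\sum_{d\in\mathcal{D}} z^\alpha(d) = m^k b_{\overline\alpha}$. Since the nodes lie in $\Omega_m^k$ and every $z^\alpha \in S \subset S_m$ has exponent in $\mathbb{Z}_m^k$, the evaluation $z^\alpha(d)$ is well defined and consistent with the residue indexing of $b$. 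Substituting, the displayed condition becomes $\int_{\mathbb{C}^k} z^\alpha \, d\lambda = \frac{m^k}{n} b_{\overline\alpha}$ for every $z^\alpha \in S$, which is precisely the statement $z^\alpha \in A$ for all $z^\alpha \in S$, that is $S \subseteq A$. Since the argument is a chain of equivalences, this yields both implications at once.

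The steps are routine once the reduction is set up; the one point needing care is the rewriting of the node sum, namely the identity $\overline{\overline\alpha}=\alpha$ together with the bookkeeping that $b$ is indexed by residues modulo $m$ while $A$ ranges over all $\alpha \in \mathbb{Z}_{\ge0}^k$. One must verify that evaluating a monomial of $S$ at $m$-th roots of unity is compatible with this residue indexing, so that $b_{\overline\alpha}$ really is the coefficient that appears. I expect this to be the only genuinely delicate part; everything else is linear algebra driven by the uniqueness assertion of Proposition~\ref{pr:int-cub-r}.
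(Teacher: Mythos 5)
Your proof is correct and follows essentially the same route as the paper's: both rest on the formula $X_{\mathcal D,S}^t w_S = \left[\int_{\mathbb C^k} s \, d\lambda\right]_{s\in S}$ from Proposition~\ref{pr:int-cub-r} together with the identity $\left[b_{\overline\alpha}\right]_{z^\alpha\in S} = \frac{1}{m^k}X_{\mathcal D,S}^t 1_n$ derived from Eq.~\eqref{ind}. The only difference is presentational --- you obtain both implications at once via uniqueness of the solution of the linear system, whereas the paper argues the two directions separately (using invertibility of $X_{\mathcal D,S}$ for the converse) --- which amounts to the same linear algebra.
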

\begin{proof}
From Proposition \ref{pr:int-cub-r} and Eq.~(\ref{ind}) (restricted to the $\alpha$ such that $z^\alpha \in S$)  it holds:
$$
 w_S= \left(X_{\mathcal D,S}^t\right)^{-1} \left[ \int z^\alpha \ d\lambda\right]_{z^\alpha\in S} \quad \textrm{and} \quad \left[ b_{\overline \alpha} \right]_{z^\alpha\in S}=\frac 1{m^k} X_{\mathcal D,S}^t\ 1_n
$$
 If $ w_S =\frac 1 n 1_n$  then
$$\left [\int z^\alpha \ d \lambda \right ]_{z^\alpha \in S}  = \frac 1 n X_{\mathcal D,S}^t\ 1_n=\frac{m^k}n \left[ b_{\overline \alpha} \right]_{z^\alpha\in S}$$
so that
$S \subseteq A$.

Vice-versa, if  $S \subseteq A$, for $z^\alpha \in S$, it holds:
$$  \left[ b_{\overline \alpha} \right]_{z^\alpha\in S} = \frac n {m^k}  \left[ \int z^\alpha \ d\lambda   \right]_{z^\alpha\in S}=
 \frac {n} {m^k} X_{\mathcal D,S}^t w_S
$$
and, from $\left[ b_{\overline \alpha} \right]_{z^\alpha\in S}=\frac 1{m^k} X_{\mathcal D,S}^t\ 1_n$, it follows $w_S=\frac 1 n 1_n$, being $X_{\mathcal D,S}$ invertible.

\end{proof}

Given a set $\mathcal D \subseteq \Omega_m^k$ with $n$ points,   Theorem~\ref{teo_pesi_uguali} suggests an algorithm for finding, if there exists, a cubature rule with nodes $\mathcal D$ and equal weights.
Fix $X^{(0)}=1_n$ and $S=\{1\}$. At the $r$-th step   an element $z^\alpha$ of $A \cap  \mathbb Z_m^k$ is considered. If the vector $v=[z^\alpha(d)]_{d \in \mathcal D}$ is such that the matrix $\left[X^{(r-1)},v \right]$ has full rank, then $v$ is added to the matrix $X^{(r-1)}$ for obtaining the new matrix $X^{(r)}=\left[X^{(r-1)},v\right]$ and $z^\alpha$ is added to $S$. Otherwise a different element of $A \cap  \mathbb Z_m^k$ is considered. 

The algorithm stops when  a square non singular matrix $X^{(n-1)}$ is computed or  if all the elements of $A \cap  \mathbb Z_m^k$ are analysed. In the former case the basis $S$ is such that the cubature rule $(\mathcal D, S)$ has equal weights. In the latter case there not exists any basis such that the associated cubature rule with node  $\mathcal D$ has  equal weights.
Notice that the algorithm stops because the elements of $A \cap \mathbb Z_m^k$ are finite.

\vspace{2 mm}

The following theorem characterises the precision basis of cubature rules with equal weights. 

\begin{theorem}\label{prec_pesi_uguali}
Let $\mathcal D \subset \Omega_m^k$ be a set of $n$ nodes and let $f$ its indicator function, as in Eq. \eqref{ind}. 
Let $S \subset S_m$ be a monomial set such that the pair $(\mathcal D, \operatorname{Span}(S))$ is correct and let $w_S$ as in Proposition \ref{pr:int-cub-r}.

If $w_S=\displaystyle  \frac 1 n 1_n$  then the set $A$ defined in Theorem~\ref{teo_pesi_uguali}
 is the precision basis $\mathcal B$ for $(\mathcal D,w_S)$.

\end{theorem}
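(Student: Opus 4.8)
The plan is to show that the predicate defining $A$ is, monomial by monomial, \emph{identical} to the exactness condition that characterises the precision basis. Recall that $z^\alpha$ lies in the precision basis $\mathcal B$ exactly when the rule is exact on it, i.e. when $\int_{\mathbb C^k} z^\alpha \, d\lambda = \sum_{d \in \mathcal D} w_d\, z^\alpha(d)$, and that $\mathcal B$ is by definition the largest monomial set enjoying this property. Since we are in the equal-weight case $w_S = \frac 1 n 1_n$, every weight equals $1/n$ and the right-hand side becomes $\frac 1 n \sum_{d \in \mathcal D} z^\alpha(d)$. The entire argument thus reduces to expressing this node-sum through the coefficients $b_{\overline\alpha}$ of the indicator function.

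The key step is to apply Eq.~\eqref{ind} with $\alpha$ replaced by $\overline\alpha$. Using $\overline{\overline\alpha} = \alpha$ in $\mathbb Z_m^k$ this yields $b_{\overline\alpha} = \frac 1{m^k} \sum_{d \in \mathcal D} z^{\overline{\overline\alpha}}(d) = \frac 1{m^k} \sum_{d \in \mathcal D} z^\alpha(d)$, where the last equality uses that each node $d$ belongs to $\Omega_m^k$, so a monomial may have its exponents reduced modulo $m$ without changing its value at $d$; this is also what lets me apply formula~\eqref{ind}, originally indexed by $\mathbb Z_m^k$, to an arbitrary $\alpha \in \mathbb Z_{\ge 0}^k$. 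Hence $\sum_{d \in \mathcal D} z^\alpha(d) = m^k\, b_{\overline\alpha}$.

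Substituting back, the exactness condition becomes $\int_{\mathbb C^k} z^\alpha \, d\lambda = \frac{m^k}{n}\, b_{\overline\alpha}$, which is precisely the condition defining membership of $z^\alpha$ in $A$. Therefore $z^\alpha \in \mathcal B \iff z^\alpha \in A$ for every monomial, so $A$ is exactly the complete, and hence largest, set on which the rule is exact, i.e. $A = \mathcal B$. The argument is a single bookkeeping identity, so no serious obstacle arises; the only point demanding care is the interaction between the conjugation $\alpha \mapsto \overline\alpha$ and reduction modulo $m$, namely verifying $\overline{\overline\alpha} = \alpha$ in $\mathbb Z_m^k$ and that evaluation at $m$-th roots of unity makes formula~\eqref{ind} applicable to all of $\mathbb T$, not just to exponents already in $\mathbb Z_m^k$.
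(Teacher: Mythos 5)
Your proposal is correct and follows essentially the same route as the paper: both reduce exactness under equal weights to the identity $\sum_{d\in\mathcal D} z^\alpha(d) = m^k\, b_{\overline\alpha}$, obtained from Eq.~\eqref{ind} via $\overline{\overline\alpha}=\alpha$ and the reduction $z^\alpha(d)=z^{[\alpha]_m}(d)$ valid at roots of unity, and then observe that the resulting condition $\int_{\mathbb C^k} z^\alpha\, d\lambda = \frac{m^k}{n} b_{\overline\alpha}$ is verbatim the membership condition for $A$, so $A$ coincides with the largest exact monomial set $\mathcal B$. The care you flag about extending Eq.~\eqref{ind} from $\mathbb Z_m^k$ to all of $\mathbb Z_{\ge 0}^k$ is exactly the step the paper handles with the same modular-reduction argument.
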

\begin{proof}
From  Eq.~(\ref{ind}), for each $\alpha \in[0,\dots, m-1\
]^k$, $b_{\overline \alpha} =  \displaystyle \frac 1 {m^k} \sum_{d \in {\mathcal D}} z^{\alpha}(d)$, that is $\sum_{d \in {\mathcal D}} z^{\alpha}(d)= m^k b_{\overline \alpha} $.

For each $\alpha \in \mathbb Z^k_{\ge 0}$ it holds $z^\alpha(d)=z^{[\alpha]_m}(d)$. Then:
$$
\sum_{d\in \mathcal D} z^\alpha(d)=\sum_{d\in \mathcal D} z^{[\alpha]_m}(d)= m^k \ b_{\overline \alpha}
$$
Let $w = \frac 1 n 1_n$ be the weights. Then the precision space for $(\mathcal D, \frac 1 n 1_n)$ is the largest set of monomials for which
\begin{equation*}
\int z^\alpha \ d\lambda = \frac 1 n\sum_{d \in \mathcal D} \ z^\alpha(d) \quad \textrm{i.e.} \quad \int z^\alpha \ d\lambda = \frac{m^k} n b_{\overline\alpha}
\end{equation*}
that is $A$.
\end{proof}

The following result {describes the behaviour of a} the cubature rule with equal weights, when it is applied to {the integral of} monomials of the form $z^\alpha\overline z^\beta$. 
The connection to evaluation of moments of a distribution is evident. 

\begin{theorem} \label{monomio_coniugato}
Let  $(\mathcal D, w_s)$ be a cubature rule with $w_s =\frac 1 n 1_n$. The cubature rule $(\mathcal D, w_s)$ is exact for  $z^\alpha$
\begin{enumerate}
\item   if and only if $(\mathcal D, w_s)$ is exact for  $   \overline z^\alpha$.
\item  if and only if $(\mathcal D, w_s)$ is exact for  $z^{\alpha +\gamma}\, \overline z^{\gamma} $ for 
each $\gamma \in \mathbb Z^k$ such that  $\int_{\mathbb C^k} z^{\alpha +\gamma} \, \overline z^{\gamma} \, d\lambda = \int_{\mathbb C^k} z^\alpha \, d \lambda$.
\end{enumerate}
\end{theorem}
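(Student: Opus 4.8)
The plan is to reduce both equivalences to two elementary facts: that every coordinate of a node $d\in\mathcal D\subset\Omega_m^k$ is an $m$-th root of unity and hence has modulus one, and that $\lambda$ is a real (nonnegative) probability measure, so that complex conjugation commutes with integration. Throughout I use that, since $w_s=\frac1n 1_n$, the rule is exact for a monomial precisely when its integral equals $\frac1n$ times the sum of its values over $\mathcal D$; so in each case I must compare these two quantities.

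For item (1), I first record that for any point $z\in\mathbb C^k$ the conjugate of a product is the product of the conjugates, whence $\overline z^\alpha=\overline{z^\alpha}$ as functions; evaluated at a node this gives $\overline z^\alpha(d)=\overline{z^\alpha(d)}$, and since $\lambda$ is real, $\int_{\mathbb C^k}\overline z^\alpha\,d\lambda=\overline{\int_{\mathbb C^k}z^\alpha\,d\lambda}$. The argument is then simply to take complex conjugates of the exactness identity for $z^\alpha$: the left-hand side becomes $\int_{\mathbb C^k}\overline z^\alpha\,d\lambda$ and the right-hand side becomes $\frac1n\sum_{d\in\mathcal D}\overline z^\alpha(d)$, which is exactly the exactness identity for $\overline z^\alpha$. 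The converse follows by the same computation applied to $\overline z^\alpha$, conjugation being an involution.

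For item (2), the key computation is that on the nodes the factor $z^\gamma\overline z^\gamma$ is identically $1$. Indeed, at $d\in\Omega_m^k$ each coordinate satisfies $|d_i|=1$, so $d_i^{\gamma_i}\,\overline{d_i}^{\,\gamma_i}=|d_i^{\gamma_i}|^2=1$ even when $\gamma_i<0$; multiplying over $i$ gives $z^\gamma(d)\,\overline z^\gamma(d)=1$, hence $z^{\alpha+\gamma}\overline z^\gamma(d)=z^\alpha(d)$ for every $d\in\mathcal D$ and therefore $\sum_{d\in\mathcal D}z^{\alpha+\gamma}\overline z^\gamma(d)=\sum_{d\in\mathcal D}z^\alpha(d)$. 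Under the standing hypothesis $\int_{\mathbb C^k}z^{\alpha+\gamma}\overline z^\gamma\,d\lambda=\int_{\mathbb C^k}z^\alpha\,d\lambda$, the two exactness identities (for $z^\alpha$ and for $z^{\alpha+\gamma}\overline z^\gamma$) have equal left-hand sides and equal right-hand sides, so one holds if and only if the other does.

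The whole proof is essentially bookkeeping once the two pointwise identities are in place, so there is no serious obstacle; the steps to watch are the claim $z^\gamma\overline z^\gamma\equiv 1$ for $\gamma\in\mathbb Z^k$ with possibly negative entries, where one must invoke $|d_i|=1$ rather than any positivity of the exponents, and the reality of $\lambda$ in item (1), without which conjugation would not pass through the integral.
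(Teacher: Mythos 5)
Your proof is correct and follows essentially the same route as the paper's: item (1) by conjugating the exactness identity (using that conjugation passes through the integral), and item (2) by observing that $z^\gamma(d)\,\overline z^\gamma(d)=|z^\gamma(d)|^2=1$ at nodes whose coordinates are roots of unity, so that the two exactness identities agree term by term. The only differences are cosmetic: you handle both directions of (2) symmetrically where the paper argues one direction and declares the converse analogous, and you make explicit two points the paper leaves implicit, namely the reality of $\lambda$ needed in (1) and the fact that $|d_i|=1$ covers negative entries of $\gamma\in\mathbb Z^k$.
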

\begin{proof}
\begin{enumerate}
\item If $(\mathcal D, w_s)$ is exact for $z^\alpha$, then 
$$\int_{\mathbb C^k} z^\alpha d\lambda = \frac 1 n \sum_{d \in \mathcal D} z^\alpha (d) \ ,$$
and so
\begin{eqnarray*}
\int_{\mathbb C^k} \overline z^\alpha \ d\lambda
= \int_{\mathbb C^k}  \overline {z^\alpha} \ d\lambda
= \overline{\int_{\mathbb C^k}  z^\alpha \ d\lambda}
= \frac 1 n \overline{\sum_{d \in \mathcal D} z^\alpha (d) } 
=\frac 1 n  \sum_{d \in \mathcal D} \overline {z ^\alpha(d)} = \frac 1 n  \sum_{d \in \mathcal D} \overline z ^\alpha(d)
\end{eqnarray*}
and we conclude that $(\mathcal D, w_s)$ is exact for $\overline z^\alpha$. The vice-versa is analogous.

\item Let $(\mathcal D, w_s)$ be exact for $z^\alpha$. By assumptions $\int_{\mathbb C^k}  z^{\alpha +\gamma} \overline z^{\gamma} \ d\lambda = \int_{\mathbb C^k}  z^\alpha \ d \lambda$  and so 
\begin{align*}
\int_{\mathbb C^k} z^{\alpha +\gamma} \overline z^{\gamma} \ d\lambda 
& = \int_{\mathbb C^k}  z^\alpha \ d \lambda 
= \frac 1 n  \sum_{d \in \mathcal D}  z ^\alpha(d) = \frac 1 n  \sum_{d \in \mathcal D} |z^\gamma(d)|^2 z ^\alpha(d) \\
& =  \frac 1 n  \sum_{d \in \mathcal D} z^{\alpha + \gamma}(d) \, \overline z ^\gamma(d)  \ ,
\end{align*}
where the third equality is due to the fact that $ |z^\gamma(d)|^2 =1$. We conclude that $(\mathcal D, w_s)$ is exact for $ z^{\alpha +\gamma} \overline z^{\gamma}$. 
Furthermore, from item 1 it follows that, since the cubature rule is exact for $ z^{\alpha +\gamma} \overline z^{\gamma}$, it is also exact for $ \overline{z^{\alpha +\gamma} \overline z^{\gamma}} $, that is for 
$ \overline z^{\alpha +\gamma}z^{\gamma}$. 

The vice-versa is analogous.
\end{enumerate}
\end{proof}

\subsection{Gaussian distribution}\label{Sc:Gauss}
In this section we characterise the cubature rules  $(\mathcal D, S)$ with equal weights and  the Gaussian distribution. First of all, we present some results about integration with respect to the  Gaussian measure.

\subsubsection{Gaussian measure}

Let $Z^t=(Z_1, \dots, Z_p) $ be a  $p$-variate Gaussian complex random variable.
Let $Z_k=X_k + \mathbf{i} Y_k$, $k=1,\dots,p$, then the vector of real and imaginary parts $(X_1,Y_1,\dots, X_p,Y_p)$ is  a    $2p$-variate Gaussian real random vector. We assume the following relations among the expected values of the real and imaginary parts of the $Z$ variables.
\begin{eqnarray*}\label{conditions}
E(X_k)=E(Y_k)=0 && \\
E(X_j X_j)=E(Y_jY_j)=\frac{\sigma^2_j}{2} && E(X_j Y_j)=0 \\
E(X_j X_k)=E(Y_jY_k)=\frac{\alpha_{jk}}{2} \textrm{ for }  j\ne k && E(X_j Y_k)=-E(X_kY_j)=-\frac{\beta_{jk}}{2} \textrm{ for }  j > k
\end{eqnarray*}
for $j,k=1,\dots,p$.

We denote by  $\Sigma$ the matrix $E(Z \overline Z^t) = [ E(Z_j\overline Z_k)]_{j,k=1,\dots,p}$.
Then, from the previous conditions,
 \begin{eqnarray*}
\Sigma_{jk}=E(Z_j\overline Z_k) =   \begin{cases}
\sigma_k^2 & \text{ if }  j=k\\
\alpha_{jk}+\mathbf{i} \beta_{jk} & \text{ if } j< k\\
\alpha_{jk}-\mathbf{i} \beta_{jk} & \text{ if } j> k  \ .
\end{cases} 
\end{eqnarray*}
The probability density function of the zero mean $p$-variate complex Gaussian distribution is given by (see e.g. \cite{Goodman1963})
\begin{eqnarray}\label{Gauss_distr}
p(z)=\frac{1}{\pi^p \det (\Sigma)} \exp(-\overline z^t \Sigma^{-1} z)
\end{eqnarray}
where $z=[z_1,\dots,z_p]^t$.

We consider the complex measure $\nu$ such that $d\nu= p(z) \ d\mu$, where $\mu$ is the $\sigma$-finite measure of $\mathbb C^p$ identifiable with the Lebesgue measure of $\mathbb R^{2p}$, and so from the results presented in Appendix~\ref{Sc:Complex_Integ},  since $z_k=x_k+\mathbf{ i } y_k$, $k=1\dots p$, we have
$\int_{\mathbb C^p} f(z) \ d\nu = \int_{\mathbb C^p} f(z) p(z) \ d \mu =\int_{\mathbb R^{2p}} f(x,y) p(x,y) \ dx \, dy $, where 
$x=[x_1,\dots,x_p]^t$ and $y=[y_1,\dots,y_p]^t$.

We denote by $\nu(n_1,m_1,\dots,n_p,m_p)$  the moment:
\begin{equation*}
  \nu(n_1,m_1,\dots,n_p,m_p)=\frac{1}{\pi^p \det (\Sigma)} \int_{\mathbb C^p} z_1^{n_1}\  \overline z_1^{m_1}\ z_2^{n_2}\  \overline z_2^{m_2} \cdots z_p^{n_p} \ \overline z_p^{m_p}  \exp(-\overline z^t \Sigma^{-1} z) \ dz_1 \cdots dz_p
\end{equation*}
In the vector $(n_1,m_1,\dots,n_p,m_p)$, consisting of the exponents of the moments,  the $n_j$ indices are in odd entries and refer to $Z_j$ while the $m_j$ indices are in even entries and refer to $\overline Z_j$.

In~\cite{FassinoPistoneRiccomagnoRogantin2017} the following theorem on the null moments is shown.

\begin{theorem}[Null moments]\label{th:conditions}
The conditions for nullity of the moment $\nu(n_1,m_1,\dots,n_p,m_p)$ depend on the structure of independence of the variables.
\begin{enumerate}
\item If no variable $Z_j$, $j=1,\dots,p$, is independent from all the others, then the moment  $\nu(n_1,m_1,\dots,n_p,m_p)$ is zero if 
$$\sum_{j=1}^p n_j\ne \sum_{j=1}^p m_j\ ;$$
\item If there exists a subset of variables $Z_r$, $r\in R \subset \{1,\dots,p\}$ such that $Z_r$ is independent from all the others, then  $\nu(n_1,m_1,\dots,n_p,m_p)$ is zero if 
there exists $r \in R $ such that
$$ n_r\ne m_r \qquad \textrm{or} \qquad \sum_{j=1,j\notin R}^p n_j\ne \sum_{j=1,j\notin R}^p m_j \ .$$
\item If there exist $q$ subsets of variables $R_1\dots, R_q$ with variables dependent within each subset and independent between subsets, then  $\nu(n_1,m_1,\dots,n_p,m_p)$ is zero if there exists 
$h \in \{1,\dots,q\} $ such that 
$$ \sum_{j\in R_h}^p n_j\ne \sum_{j\in R_h} m_j \ .$$
\end{enumerate}
\end{theorem}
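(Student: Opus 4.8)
The plan is to exploit the rotational (``circular'') symmetry of the centred complex Gaussian density in \eqref{Gauss_distr}. The single observation driving everything is that the Hermitian form in the exponent is invariant under the global phase rotation $z\mapsto e^{\mathbf{i}\theta}z$: indeed $\overline{(e^{\mathbf{i}\theta}z)}^{\,t}\Sigma^{-1}(e^{\mathbf{i}\theta}z)=e^{-\mathbf{i}\theta}e^{\mathbf{i}\theta}\,\overline z^t\Sigma^{-1}z=\overline z^t\Sigma^{-1}z$, so the kernel $\exp(-\overline z^t\Sigma^{-1}z)$ is unchanged. Since multiplication by $e^{\mathbf{i}\theta}$ is a unitary map of $\mathbb C^p\cong\mathbb R^{2p}$, it is an isometry with Jacobian $1$ and the volume element $dz_1\cdots dz_p$ is preserved as well. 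Carrying out the substitution $z_j\mapsto e^{\mathbf{i}\theta}z_j$ in the integral defining $\nu(n_1,m_1,\dots,n_p,m_p)$ therefore leaves density and measure fixed, while the monomial $\prod_j z_j^{n_j}\overline z_j^{m_j}$ is multiplied by $e^{\mathbf{i}\theta(\sum_j n_j-\sum_j m_j)}$. This gives the functional identity
\begin{equation*}
\nu(n_1,m_1,\dots,n_p,m_p)=e^{\mathbf{i}\theta\left(\sum_j n_j-\sum_j m_j\right)}\,\nu(n_1,m_1,\dots,n_p,m_p)\qquad\text{for all }\theta\in\mathbb R,
\end{equation*}
so that $\sum_j n_j\ne\sum_j m_j$ forces the moment to vanish, which is item~1.

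For items~2 and~3 I would first recast the independence hypotheses algebraically. A centred proper (circular) complex Gaussian is completely determined by $\Sigma=E(Z\overline Z^t)$; under the stated moment relations the pseudo-covariance $E(ZZ^t)$ is zero, so mutual independence of a family of blocks is equivalent to the vanishing of all cross-entries of $\Sigma$ between them. Thus, grouping the variables along the dependence classes $R_1,\dots,R_q$ makes $\Sigma$ block diagonal, hence so is $\Sigma^{-1}$; the quadratic form splits as $\overline z^t\Sigma^{-1}z=\sum_h\overline{z_{R_h}}^{\,t}\Sigma_{R_h}^{-1}z_{R_h}$ and $\det\Sigma=\prod_h\det\Sigma_{R_h}$. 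Consequently the density factorises as $p(z)=\prod_h p_h(z_{R_h})$ and the moment factorises into a product of block moments, each integrated against its own Gaussian density $p_h$.

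Now I apply the rotation argument one block at a time. Each sub-matrix $\Sigma_{R_h}$ is Hermitian of the same shape as $\Sigma$, so $p_h$ is invariant under the independent rotation $z_{R_h}\mapsto e^{\mathbf{i}\theta_h}z_{R_h}$; repeating the computation above block-wise yields, for the $h$-th factor, $\nu_h=e^{\mathbf{i}\theta_h(\sum_{j\in R_h}n_j-\sum_{j\in R_h}m_j)}\nu_h$ for every $\theta_h$. Hence the $h$-th block moment is zero as soon as $\sum_{j\in R_h}n_j\ne\sum_{j\in R_h}m_j$, and a single vanishing factor annihilates the whole product: this is item~3. Item~2 is the same argument for the coarser partition made of the singletons $\{r\}$, $r\in R$ (each independent of the rest, so legitimately its own group, with balance condition $n_r=m_r$), together with the remaining group $\{1,\dots,p\}\setminus R$ taken as a single block and rotated by one common phase, giving the condition $\sum_{j\notin R}n_j=\sum_{j\notin R}m_j$. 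In fact all three statements are instances of one principle: for any partition of $\{1,\dots,p\}$ into mutually independent groups the moment vanishes whenever some group has unbalanced total exponents.

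The change of variables and the factorisations of $\det\Sigma$ and of the quadratic form are routine. The step I expect to require the most care is the translation between the probabilistic independence assumptions and the block-diagonal form of $\Sigma$: one must genuinely invoke that a proper complex Gaussian is governed by $\Sigma$ alone (equivalently that $E(ZZ^t)=0$ here), so that zero cross-covariance between groups really does imply their stochastic independence, and hence the factorisation $p(z)=\prod_h p_h(z_{R_h})$ on which items~2 and~3 rest.
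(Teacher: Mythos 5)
The paper itself contains no proof of Theorem~\ref{th:conditions}: the statement is imported from \cite{FassinoPistoneRiccomagnoRogantin2017} (``In \cite{FassinoPistoneRiccomagnoRogantin2017} the following theorem on the null moments is shown''), so there is no internal argument to compare yours against; what follows is an assessment of your proof on its own merits. Your proof is correct, and it is the natural self-contained route. The rotation computation is sound: $z\mapsto e^{\mathbf{i}\theta}z$ fixes the Hermitian form $\overline z^t\Sigma^{-1}z$ and, being a rotation of $\mathbb R^{2p}$, has Jacobian $1$, so $\nu(n_1,m_1,\dots,n_p,m_p)=e^{\mathbf{i}\theta(\sum_j n_j-\sum_j m_j)}\,\nu(n_1,m_1,\dots,n_p,m_p)$ for every $\theta$, which forces the moment to vanish when the total exponents are unbalanced. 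Note that this argument never uses the hypothesis of item~1 (that no variable is independent of all the others); that is harmless, since the theorem only asserts sufficiency of the exponent condition, and your proof in fact establishes it for an arbitrary nonsingular $\Sigma$ of the stated form. For items~2 and~3 you correctly isolate the one step that genuinely needs the paper's standing assumptions: the displayed moment relations make the pseudo-covariance vanish, e.g. $E(Z_jZ_k)=E(X_jX_k)-E(Y_jY_k)+\mathbf{i}\left(E(X_jY_k)+E(X_kY_j)\right)=0$ and $E(Z_j^2)=0$, so the law is determined by $\Sigma$ alone; independence between groups then gives block-diagonal $\Sigma$, hence $\Sigma^{-1}$ and $\det\Sigma$ factor, the density \eqref{Gauss_distr} factors as a product of block densities of the same form, and Fubini (legitimate, since a polynomial times a Gaussian is absolutely integrable) splits the moment into a product of block moments, one vanishing factor annihilating the product. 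Item~2 is indeed the special case of the partition into the singletons $\{r\}$, $r\in R$, plus the complementary block. Your closing observation --- that all three items are instances of the single principle that the moment vanishes whenever some independent group has unbalanced total exponents --- subsumes the case split in the statement and is a clean way to organize the result.
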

From Theorem~\ref{th:conditions} we obtain the value of the integral of the monomials in $\mathbb T$ with respect to the Gaussian distribution.
\begin{corollary}\label{cor:integral}
Let $z^\alpha=z_1^{n_1}\cdots z_p^{n_p}$ a monomial in $\mathbb T$. Then 
\begin{eqnarray*}
\begin{cases}
\int_{\mathbb C^p} z^\alpha p(z) \ dz =0 & \text{if there exists }  r \in \{1,\dots,p\} \ s.t. \ n_r \neq 0\\ \\
\int_{\mathbb C^p} z^\alpha p(z) \ dz =1 &  \text{if }  n_1=\dots = n_p= 0
\end{cases}
\end{eqnarray*}

\end{corollary}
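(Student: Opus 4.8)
The plan is to derive both cases directly from the null-moment classification of Theorem~\ref{th:conditions}, after a single bookkeeping observation. First I would note that the monomial $z^\alpha=z_1^{n_1}\cdots z_p^{n_p}$ contains no conjugate factors, so in the exponent notation $(n_1,m_1,\dots,n_p,m_p)$ every ``$m$''-index is zero: $m_1=\dots=m_p=0$. Since the density $p(z)$ already carries the normalising constant $1/(\pi^p\det\Sigma)$, the integral in question is exactly the moment $\nu(n_1,0,n_2,0,\dots,n_p,0)$, and the whole statement reduces to reading off the value of this moment.

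The second case, $n_1=\dots=n_p=0$, is immediate and needs no appeal to Theorem~\ref{th:conditions}: here $z^\alpha$ is the constant $1$, so $\int_{\mathbb C^p}z^\alpha\,p(z)\,dz=\int_{\mathbb C^p}p(z)\,dz=1$ because $p$ is a probability density on $\mathbb C^p$ (equivalently, this is the $(0,\dots,0)$ normalisation already built into $\nu$).

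For the first case, with some $n_r\neq0$, I would invoke Theorem~\ref{th:conditions}. Its three items handle progressively more general dependence structures, but each says the same thing: the variables partition into dependence blocks, and $\nu$ vanishes as soon as the $n$-sum and the $m$-sum disagree on some block. Item~1 is the single-block case, item~2 has the independent variables as singleton blocks with the rest grouped together, and item~3 is the general partition $R_1,\dots,R_q$; so it suffices to argue in the block language. Fixing $r$ with $n_r\neq0$ and letting $R_h$ be its block, non-negativity of the exponents ($\alpha\in\mathbb Z_{\ge0}^p$) gives $\sum_{j\in R_h}n_j\ge n_r>0=\sum_{j\in R_h}m_j$, which triggers nullity; hence the integral is zero.

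The only step requiring attention is the uniform treatment of the three dependence structures, and this is mild: once one observes that every structure in Theorem~\ref{th:conditions} partitions the indices into blocks and compares block $n$-sums against block $m$-sums, a single non-negative nonzero exponent $n_r$ already unbalances its own block against the identically zero $m$-vector. No integration is needed beyond the probability-density normalisation used in the trivial case.
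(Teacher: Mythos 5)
Your proof is correct and takes essentially the same route as the paper: reduce the integral to the moment $\nu(n_1,0,\dots,n_p,0)$ by observing that all conjugate exponents $m_j$ vanish, invoke Theorem~\ref{th:conditions} when some $n_r\neq 0$, and use the probability-density normalisation for the constant case. The only difference is that you spell out explicitly how each of the three dependence structures in Theorem~\ref{th:conditions} forces nullity (the block $n$-sum exceeds the identically zero block $m$-sum), a verification the paper leaves implicit.
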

\begin{proof}
In this case $m_1=\dots=m_p=0$, and so, if there exists an exponent $n_r\neq 0$,   from Theorem~\ref{th:conditions} we have $ \nu(n_1,0,\dots,n_p,0)=\displaystyle \frac{1}{\pi^p \det (\Sigma)} \int_{\mathbb C^p} z_1^{n_1}\   \cdots z_p^{n_p} \ p(z) \ dz_1 \cdots dz_p =0$ and the first part of the thesis follows. The second part is an obvious result. 
\end{proof}

%
%
%
%
%

\subsubsection{Cubature rules  with equal weights for Gaussian distribution}
Let $\nu$ be the Gaussian distribution.  Since, in this case $\int_{\mathbb C^k} z^\alpha \ d\nu =0$, if $\alpha \neq (0,\dots,0)$,  Theorem~\ref{teo_pesi_uguali} can be reformulated as follows.

\begin{theorem}\label{normale_uguali}
Let $\nu$ be the Gaussian distribution. Let $\mathcal D \subset \Omega_m^k$ be a set of $n$ nodes and let $f$ its indicator function, as in Eq. \eqref{ind}. Let $S \subset S_m$ be a monomial set such that the pair $(\mathcal D, \operatorname{Span}(S))$ is correct and let $w_S$ as in Proposition \ref{pr:int-cub-r}.
Then   $w_S= \frac 1 n 1_n$  if and only if $ S \cap \operatorname{Supp}(f) = \{ 1\}$.
\end{theorem}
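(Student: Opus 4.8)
The plan is to lean on Theorem~\ref{teo_pesi_uguali}, which already reduces the statement $w_S = \frac1n 1_n$ to the set membership $S \subseteq A$. Hence everything comes down to identifying the set $A$ explicitly when $\lambda=\nu$ is the Gaussian measure and then matching the resulting description against the condition $S \cap \operatorname{Supp}(f) = \{1\}$. Since $S \subset S_m$, I only need to describe $A \cap S_m$.

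First I would substitute the Gaussian moment values of Corollary~\ref{cor:integral} into the defining equation of $A$. For $\alpha=(0,\dots,0)$ one has $\int_{\mathbb C^k} 1\, d\nu = 1$, while from Eq.~\eqref{ind} the constant coefficient is $b_0 = \frac{1}{m^k}\sum_{d\in\mathcal D} 1 = \frac{n}{m^k}$; therefore $\frac{m^k}{n} b_0 = 1$ and the defining equality of $A$ is satisfied, so $1 \in A$, and $b_0 \neq 0$ simultaneously shows $1 \in \operatorname{Supp}(f)$. For $\alpha \in \mathbb Z_m^k$ with $\alpha \neq 0$, Corollary~\ref{cor:integral} gives $\int_{\mathbb C^k} z^\alpha\, d\nu = 0$, so membership in $A$ becomes the condition $b_{\overline\alpha} = 0$. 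Here I would record the conjugation symmetry of the Fourier coefficients: because the nodes are roots of unity, $\overline{z^\alpha(d)} = z^{\overline\alpha}(d)$, whence $b_{\overline\alpha} = \frac{1}{m^k}\sum_{d} z^{\alpha}(d) = \overline{b_\alpha}$. Thus $b_{\overline\alpha} = 0 \iff b_\alpha = 0 \iff z^\alpha \notin \operatorname{Supp}(f)$, and I would conclude
\[
A \cap S_m = \{1\} \cup \bigl(S_m \setminus \operatorname{Supp}(f)\bigr).
\]

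With this description the reverse implication is immediate: if $S \cap \operatorname{Supp}(f) = \{1\}$ then $1 \in S \subseteq A$ and every remaining $z^\alpha \in S$ lies outside $\operatorname{Supp}(f)$, hence in $A$, so $S \subseteq A$ and Theorem~\ref{teo_pesi_uguali} yields $w_S = \frac1n 1_n$. For the forward implication, $S \subseteq A$ gives at once $S \cap \operatorname{Supp}(f) \subseteq \{1\}$. The main obstacle is upgrading this inclusion to the stated \emph{equality}, i.e. proving that $1$ actually belongs to $S$. I would argue this by contradiction using correctness: if $1 \notin S$, then $S \subseteq S_m \setminus \operatorname{Supp}(f)$, so $\sum_{d\in\mathcal D} z^\alpha(d) = m^k b_{\overline\alpha} = 0$ for every $z^\alpha \in S$, which is exactly the row identity $1_n^t X_{\mathcal D,S} = 0$. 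But correctness of $(\mathcal D, \operatorname{Span}(S))$ forces $X_{\mathcal D,S}$ to be an invertible $n \times n$ matrix, so $1_n^t = 0$, an absurdity. Hence $1 \in S$ and $S \cap \operatorname{Supp}(f) = \{1\}$, closing the equivalence.
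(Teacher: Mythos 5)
Your proof is correct and follows essentially the same route as the paper's: reduce to $S \subseteq A$ via Theorem~\ref{teo_pesi_uguali}, compute $A$ for the Gaussian measure using Corollary~\ref{cor:integral} together with the conjugation symmetry $b_{[\alpha]_m}=\overline{b_{\overline\alpha}}$, and match the resulting description of $A$ against the condition $S\cap\operatorname{Supp}(f)=\{1\}$. The only difference is that you explicitly justify why $1\in S$ must hold in the forward direction (your observation that otherwise $1_n^t X_{\mathcal D,S}=0$, contradicting the invertibility forced by correctness), a point the paper's proof passes over silently when it concludes that $S\subset A$ if and only if $S\cap\operatorname{Supp}(f)=\{1\}$; this is a welcome tightening of the same argument rather than a different one.
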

\begin{proof}
From Theorem~\ref{teo_pesi_uguali} we have that $w_S= \frac 1 n 1_n$  if and only if $S\subset A$, where
$$ A=\left \{ z^\alpha \in\mathbb T, \  \alpha \in \mathbb Z^k_{\ge 0}  \ \left|  \ \int_{\mathbb C^k} z^\alpha \ d\nu = \frac {m^k} n \ b_{\overline{\alpha}}\right. \right\} \ ,$$
and so we first describe $A$ for the Gaussian distribution.

When $\alpha=[0,\dots,0]$ we have that $\int z^{\alpha} d\nu =1 $ and $b_{\overline \alpha}=\frac n {m^k}$ and so $1$ belongs to $A$. Furthermore, if $\alpha \neq [0,\dots, 0]$, then $\int_{\mathbb C^k} z^\alpha d\nu =0$, and so the set $A$ is given by
\begin{eqnarray*} 
A=\{1\} \cup \left \{ z ^\alpha \in \mathbb T ,\ \alpha \in \mathbb Z^k_{> 0}\ \left|  \   b_{\overline\alpha} =0\right. \right\}=\{1\} \cup \left \{ z ^\alpha \in \mathbb T ,\ \alpha \in \mathbb Z^k_{> 0}\ \left|  \   b_{[\alpha]_m} =0\right. \right\}
\end{eqnarray*}
since $b_{[\alpha]_m}=\overline{ b_{\overline\alpha} }$. 
From Equation~(\ref{ind}), we have that $b_{[\alpha]_m}=0$ if and only if   $z^{[\alpha]_m} \notin \operatorname{Supp}(f)$, and thus 
\begin{equation*}
A =\{1\} \cup   \left \{z^\alpha \in \mathbb T, \alpha \in \mathbb Z^k_{> 0}\;|\;  z^{[\alpha]_m} \notin \operatorname{Supp}(f) \right \}  \ .
\end{equation*}
We conclude that  $S\subset A$  if and only if $ S \cap \operatorname{Supp}(f) = \{ 1\}$.
\end{proof}

The following theorem characterises  the precision basis of a cubature rule with equal weights, with respect to  the gaussian distribution.

\begin{corollary}\label{pesi_uguali}
Let $\nu$ be the Gaussian distribution. Let $\mathcal D \subset \Omega_m^k$ be a set of $n$ nodes and let $f$ its indicator function, as in Eq. \eqref{ind}. Let $S \subset S_m$ be a monomial set such that the pair $(\mathcal D, \operatorname{Span}(S))$ is correct and let $w_S$ as in Proposition \ref{pr:int-cub-r}.
If $w_S= \frac 1 n 1_n$  then  
$\left \{1\right\} \cup \left \{z^\alpha \in \mathbb T\,|\, z^{[\alpha]_m} \notin \operatorname{Supp}(f) \right \} $ is the precision basis $\mathcal B$ for $(\mathcal D,w_S)$.
\end{corollary}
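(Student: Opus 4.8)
The plan is to obtain this corollary as a direct combination of two facts already available: Theorem~\ref{prec_pesi_uguali}, which identifies the precision basis of an equal-weight rule with the set $A$, and the explicit form of $A$ for the Gaussian measure, essentially computed inside the proof of Theorem~\ref{normale_uguali}.

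First I would invoke Theorem~\ref{prec_pesi_uguali}. Since by hypothesis $w_S = \frac{1}{n}1_n$, that theorem gives at once that the precision basis $\mathcal B$ of $(\mathcal D, w_S)$ equals the set $A$ of Theorem~\ref{teo_pesi_uguali}, namely $A = \{ z^\alpha \in \mathbb T \mid \int_{\mathbb C^k} z^\alpha \, d\nu = \frac{m^k}{n} b_{\overline\alpha} \}$. Thus it only remains to describe $A$ when $\nu$ is the Gaussian measure.

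Second, I would specialise $A$ exactly as in the proof of Theorem~\ref{normale_uguali}. For $\alpha = 0$ one has $\int z^0 \, d\nu = 1$ and $b_0 = n/m^k$, so the defining equation of $A$ holds and $1 \in A$. For $\alpha \neq 0$, Corollary~\ref{cor:integral} gives $\int z^\alpha \, d\nu = 0$, so $z^\alpha \in A$ if and only if $b_{\overline\alpha} = 0$. Using that each node $d \in \Omega_m^k$ has root-of-unity coordinates, so $z^{\overline\alpha}(d) = \overline{z^{[\alpha]_m}(d)}$, one gets $b_{\overline\alpha} = \overline{b_{[\alpha]_m}}$, whence $b_{\overline\alpha} = 0$ precisely when $b_{[\alpha]_m} = 0$. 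By Eq.~\eqref{ind} the latter holds if and only if $z^{[\alpha]_m} \notin \operatorname{Supp}(f)$. This yields $A = \{1\} \cup \{ z^\alpha \in \mathbb T \mid \alpha \neq 0, \ z^{[\alpha]_m} \notin \operatorname{Supp}(f)\}$.

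Finally I would drop the redundant restriction $\alpha \neq 0$: since $b_0 = n/m^k \neq 0$, the constant monomial $1 = z^0$ lies in $\operatorname{Supp}(f)$, so the condition $z^{[\alpha]_m} \notin \operatorname{Supp}(f)$ already fails at $\alpha = 0$. Consequently the two descriptions of the non-constant part coincide and $A$ equals $\{1\} \cup \{ z^\alpha \in \mathbb T \mid z^{[\alpha]_m} \notin \operatorname{Supp}(f)\}$, which is exactly the claimed precision basis. The argument is essentially bookkeeping built on the earlier theorems; the only step deserving attention is this last identification, where one must check that removing the constraint $\alpha \neq 0$ does not enlarge the set, which is immediate because the indicator function always has $1$ in its support.
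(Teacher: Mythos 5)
Your proposal is correct and follows exactly the route the paper intends: the corollary is the immediate combination of Theorem~\ref{prec_pesi_uguali} (precision basis equals $A$ for equal weights) with the explicit description of $A$ for the Gaussian measure already derived in the proof of Theorem~\ref{normale_uguali}. Your extra check that the restriction $\alpha \neq 0$ can be dropped because $1 \in \operatorname{Supp}(f)$ (as $b_0 = n/m^k \neq 0$) is a small bookkeeping point the paper leaves implicit, and you handle it correctly.
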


\smallskip\noindent
Given a set of nodes $\mathcal D$, it is possible to check if there exists a basis $S$ so that the corresponding cubature rule $(\mathcal D, S)$ has equal weights in an easier way than in the general case, since it is sufficient to   consider only  monomials  $t \in \mathbb Z_m^k \setminus \operatorname{Supp}(f)$, as Corollary~\ref{pesi_uguali} shows that $ S \cap \operatorname{Supp}(f) =\{1\}$ in order to have equal weights.

\begin{proposition}
Let $\nu$ be the Gaussian distribution.   If $\mathcal D$ is a regular fraction,  for each  basis $S$ such that  the pair $(\mathcal D, \operatorname{Span}(S))$ is correct, the corresponding cubature rule has equal weights.
\end{proposition}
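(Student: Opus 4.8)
The plan is to reduce the statement to Theorem~\ref{normale_uguali}, which asserts that for the Gaussian measure $\nu$ one has $w_S=\frac1n 1_n$ exactly when $S\cap\operatorname{Supp}(f)=\{1\}$. So it suffices to show that, when $\mathcal D$ is a regular fraction, every monomial basis $S$ making $(\mathcal D,\operatorname{Span}(S))$ correct meets $\operatorname{Supp}(f)$ only in the constant $1$. First I would recall the defining feature of a regular fraction in the indicator-function language of~\cite{PR2008}: the coefficients $b_\alpha$ in~\eqref{ind} are either zero or all share the same modulus $b_0=n/m^k$, and the exponents of the non-zero ones form a subgroup $L\subseteq\mathbb Z_m^k$. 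Since $b_0=\frac1{m^k}\sum_{d\in\mathcal D}1=n/m^k\neq0$, the class $0$ lies in $L$, so $1\in\operatorname{Supp}(f)=\{z^\ell:\ell\in L\}$, and the number of cosets is $\#(\mathbb Z_m^k/L)=m^k/\#L=n$.

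The central step is the aliasing structure induced by $L$. For $\ell\in L$ the monomial $z^\ell$ is constant on the regular fraction $\mathcal D$, equal to a root of unity, so for any $\alpha$ the evaluation vectors $[z^{\alpha+\ell}(d)]_{d\in\mathcal D}$ and $[z^{\alpha}(d)]_{d\in\mathcal D}$ are proportional; hence monomials lying in a common coset of $L$ give linearly dependent columns of the evaluation matrix. Conversely, for $\alpha,\beta$ in distinct cosets I would compute the Hermitian product of their evaluation vectors as $\sum_{d\in\mathcal D}z^\alpha(d)\,\overline{z^\beta(d)}=\sum_{d\in\mathcal D}z^{[\alpha-\beta]_m}(d)=m^k\,b_{\overline{\alpha-\beta}}$, using $\overline{\omega_j}=\omega_{\overline j}$ together with the identity $\sum_{d\in\mathcal D}z^\gamma(d)=m^k b_{\overline\gamma}$ established in the proof of Theorem~\ref{prec_pesi_uguali}; since $\alpha-\beta\notin L$ this coefficient vanishes, so the two evaluation vectors are orthogonal. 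Thus evaluation vectors are proportional within cosets and orthogonal across cosets.

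With this in hand the conclusion is a counting argument. Because vectors from one coset are pairwise proportional, a correct basis $S$ (for which $X_{\mathcal D,S}$ is square and non-singular, i.e. its $\#S=n$ columns are independent) can contain at most one monomial per coset; as there are exactly $n$ cosets and $\#S=n$, it contains exactly one representative of each, and the resulting $n$ columns, being mutually orthogonal, are automatically independent. In particular $S$ meets the coset $L=\operatorname{Supp}(f)$ in a single monomial. Since the bases considered here are the monomial bases of $\mathbb C[z_1,\dots,z_k]/I(\mathcal D)$ produced, e.g., by the Buchberger-M\"oller algorithm, they are order ideals and contain $1$; as $1\in L$, that single monomial is forced to be $1$, whence $S\cap\operatorname{Supp}(f)=\{1\}$ and Theorem~\ref{normale_uguali} yields $w_S=\frac1n 1_n$.

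The main obstacle I anticipate is the first structural input rather than the counting: making precise, from the adopted definition of regular fraction, that $\operatorname{Supp}(f)$ is genuinely a subgroup of $\mathbb Z_m^k$ and that $z^\ell$ is constant on $\mathcal D$ for $\ell\in L$ (equivalently, that the non-zero $b_\alpha$ have constant modulus). A secondary but essential subtlety is the hypothesis $1\in S$: a correct basis could in principle pick a different representative $z^\ell$, $\ell\in L\setminus\{0\}$, of the identity coset and then fail to have equal weights, so the argument must restrict, as the paper tacitly does, to the order-ideal bases that contain the constant.
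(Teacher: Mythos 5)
Your proposal is correct, and it reaches the paper's conclusion by the same overall reduction --- show that $S\cap\operatorname{Supp}(f)=\{1\}$ and invoke Theorem~\ref{normale_uguali} --- but through a different entry point into regular-fraction theory. The paper's proof is shorter: it takes as the working property of a regular fraction the dichotomy that every monomial evaluation vector $[s(d)]_{d\in\mathcal D}$ is either proportional to $1_n$ or orthogonal to it; non-singularity of $X_{\mathcal D,S}$, whose first column is $1_n$, then forces every other column to be orthogonal to $1_n$, so $\sum_{d\in\mathcal D}z^\alpha(d)=0$, i.e.\ $b_{\overline\alpha}=0$, i.e.\ $z^\alpha\notin\operatorname{Supp}(f)$ for all $z^\alpha\in S\setminus\{1\}$. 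You instead start from the subgroup characterization ($\operatorname{Supp}(f)=\{z^\ell:\ell\in L\}$ with $L\le\mathbb Z_m^k$, constant-modulus coefficients), derive the within-coset proportionality and cross-coset orthogonality of evaluation vectors, and finish with a counting argument over the $n$ cosets. Your route is heavier than necessary --- the cross-coset orthogonality and the count are not needed, since the conclusion follows already from the observation that any $z^\ell$ with $\ell\in L\setminus\{0\}$ is constant on $\mathcal D$, hence its column is proportional to that of $1$, contradicting correctness; this observation is exactly the contrapositive of the paper's step --- but it is correct and proves slightly more (a correct basis contains exactly one monomial per alias coset). A genuine merit of your write-up is that you make explicit the hypothesis $1\in S$, which the paper uses silently when it asserts that the first column of $X_{\mathcal D,S}$ is $1_n$: as you note, a correct basis containing $z^\ell$, $\ell\in L\setminus\{0\}$, in place of $1$ would satisfy $\sum_d w_d=0$ and so could not have equal weights, so the restriction to bases containing the constant (e.g.\ the order-ideal bases produced by Buchberger-M\"oller) is genuinely needed for the statement as written.
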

\begin{proof}
 If $\mathcal D$ is a regular fraction, for each monomial  $s\in \mathbb T$, the vector $[s(d)]_{d \in \mathcal D}$ is equal to $\gamma  1_n$, for a given $\gamma\in \mathbb C$, or  orthogonal  to $1_n$. Since $X_{\mathcal D, S}$ is a non singular matrix whose first column is the vector $1_n$, for each monomial basis $S=\{z^\alpha\}$, the columns of  $X_{\mathcal D, S}$, except the first one,  are orthogonal to $1_n$. It follows that $\Sigma_{d\in \mathcal D} z^\alpha(d) =0$ for each  $z^\alpha \in S\setminus \{1\}$ and so  $z^\alpha \notin \operatorname{Supp}(f)$.
We conclude that, if $\mathcal D$ is a regular fraction,  $S \cap \operatorname{Supp} (f)= \{1\}$ for each possible basis $S$ and so the corresponding cubature rule has equal weights.
\end{proof}

\medskip
The following example shows that a cubature rule can have equal weights even if $\mathcal D$ is not a regular fraction of $\Omega_m^k$.
\begin{example}{\rm
Let $\mathbb C[z_1,\dots, z_4]$ be the polynomial ring with indeterminates $z_1,\dots,z_4$ and let $\mathcal D$ be the set of nodes contained in $\Omega_2^4$,
\begin{align*}
\mathcal D =& \,  \{(1,1,1,1),(1,1,-1,1),(1,-1,1,-1),(1,-1,-1,-1), \\
& \quad (-1, 1, 1, 1), (-1, -1, -1, 1), (-1, -1, -1, -1), (-1, 1, 1, -1)\}
\end{align*}
whose indicator function is $f = (2 + z_2z_3 + z_2z_4-z_1z_2z_3 + z_1z_2z_4)/4$.
The set $\mathcal D$ is not a regular fraction since $[z_1z_2z_4(d) ]_{d\in \mathcal D} = [1, 1, 1,1, -1, 1, -1,1]^t $,  that is $[z_1z_2z_4(d) ]_{d\in \mathcal D} $ is not orthogonal nor parallel to $1_8$.

Given the monomial set $S = [1, z_1, z_2, z_3, z_4, z_1z_2, z_3z_4, z_1z_3z_4]$, the weights $w_S$ are the solution of the linear system $X_{ \mathcal D, S}^t w_S =  [\int sd\nu]_{s\in S}$, where $\int s \,d\nu =0 $ if $s\neq 1$ and $\int s \,d\nu=1$ if $s=1$. Since the columns of the matrix $X_{ \mathcal D, S}$, except the first one, are orthogonal to $1_8$, the vector $w_S= \frac 1 8 1_8$ is the solution of the previous linear system, even if the set of nodes is not a regular fraction.
}\end{example}

\medskip
The following example illustrates a cubature rule with equal weights and its precision basis.

\begin{example}{\rm
Let $k=2$, $m=4$ and let $\omega_0=1$, $\omega_1= \mathbf{i}$, $\omega_2=-1$ and $\omega_3=-\mathbf{i}$ be the fourth root of the unit. In this case, the full factorial is $\mathcal F=[\omega_0, \omega_1, \omega_2, \omega_3 ]^2$.
Let  $\mathcal D= \{ (1,1), (\mathbf{i},-\mathbf{i}), (-1,\mathbf{i}), (-\mathbf{i},-1)   \} $ be the set of nodes with indicator function 
\begin{eqnarray*}
 f= \frac1 8 \left(2+z_1z_2+(1+\mathbf{i})z_1z_2^2+ (1-\mathbf{i})z_1^2z_2 - \mathbf{i}z_1z_2^3 + \mathbf{i} z_1^3z_2 +(1+\mathbf{i})z_1^2z_2^3+(1-\mathbf{i})z_1^3z_2^2+z_1^3z_2^3\right) \, . 
 \end{eqnarray*}
Given the monomial set $S = [1, z_2, z_1, z_2^3]$, we have $S \cap  \operatorname{Supp}(f)=\{1\}
$ 
and so the nodes of the cubature rule $(\mathcal D, S)$ are  $w_d = \frac 1 4$, for all $ d \in \mathcal D$.
From Corollary~\ref{pesi_uguali} it follows that the precision basis is 
$$\mathcal B_{\mathcal D,S} =  \{1\} \cup \left \{ z^\alpha \in \mathbb T \;|\; [\alpha]_4 \in \{ (0,1), (1,0), (0,2), (2,0),(0,3), (3,0), (2,2) \} \right \} \, . $$
From Theorem~\ref{monomio_coniugato} it follows that the cubature rule is also exact for
$\left \{{\overline z}^\alpha  \;|\; z^\alpha \in\mathcal B_{\mathcal D,S} \right \}$. Furthermore, since for each $\alpha \neq (0,\dots,0)$ and for each $\gamma \in \mathbb Z_{\ge 0}^4$ we have
$\int_{\mathbb C^k} z^\alpha d \nu = 0 =\int_{\mathbb C^k} z^{\alpha +\gamma} \, \overline z^{\gamma} \, d\nu$ , the cubature rule is also exact for $\left \{\overline z^{\alpha+\gamma} z^\gamma, z^{\alpha+\gamma}\overline z^\gamma \;|\; z^\alpha \in  \mathcal B_{\mathcal D,S}  , \ \gamma \in \mathbb Z_{\ge 0}^4\right \}$.
 }\end{example}

The following example shows the case of a set of nodes $\mathcal D$ which does not generate any cubature rule  with equal weights.

\begin{example}{\rm
Let $k=2$, $m=3$ and let $\omega_0=1$, $\omega_1= \cos(2\pi/3) +\mathbf{i} \sin(2\pi/3)$ and  $\omega_2=\overline{\omega}_1$  be the third root of the unit.

Let $\mathcal D =\{(1,\omega_2),\;(\omega_2,\omega_1) \}$ be the set of nodes with indicator function
$$ f= \frac1 9 \left(2 -z_2 - \omega_2z_1 - z_2^2 - \omega_2z_1z_2 - \omega_1z_1^2 + 2\omega_2z_1z_2^2 + 2\omega_1z_1^2z_2- \omega_1z_1^2z_2^2\right) \,.$$
Since $\operatorname{Supp}(f)=\{z^\alpha \,|\, \alpha \in  \mathbb Z_3^2 \}$, there not exist a monomial basis  $S\subset \mathbb Z_3^2$ such that $S\cap \operatorname{Supp}(f) = \{1\}$ and so there not exist a cubature  rule  $(\mathcal D,S)$ with equal weights.
}
 \end{example}

\section*{Appendix~A. Complex integration}\label{Sc:Complex_Integ}
Let $\mathcal M$ be a $\sigma$-algebra in a set $X$ and let $\{E_k\}$ be a countable partition of $E$, that is $E=\cup_k E_k$ and $E_k \cap E_j =\emptyset$, if $k\neq j$. A complex measure $\lambda$ on $\mathcal M$ is a complex-valued function on $\mathcal M$ such that
$$ \lambda (E) = \sum_{k=1}^\infty \lambda(E_k)  < +\infty.$$
The total variation $|\lambda|$ of $\lambda$ is a real positive measure defined as
$$ |\lambda| (E)= \sup_{\{E_k\}_{k=1}^\infty}\sum_{k=1}^\infty |\lambda(E_k)| \qquad \text{ for all }  E \in \mathcal M $$
where ${\{E_k\}_{k=1}^\infty}$ is a generic partition of $E$. 

The following theorem is the Lebesbue-Radon-Nikodym Theorem presented in~\cite[Th. 6.10]{Rudin1987}.
\begin{theorem}\label{Lebesbue-Radon-Nikodym}
Let $\mu$ be a positive $\sigma$-finite measure on a $\sigma$ algebra $\mathcal M$ in a set $X$, and let $\lambda$ be a complex measure on $\mathcal M$.
\begin{description}
\item{(a)} There is then a unique pair of complex measures $\lambda_a$ and $\lambda_s$ on $\mathcal M$ such that 
$$ \lambda= \lambda_a +\lambda_s \qquad \lambda_a {\ll} \mu \qquad \lambda_s {\perp }\mu \, .$$
\item{(b)} There is a unique $ h \in L^1(\mu)$, called the Radon-Nikodym derivative w.r.t. $\mu$, such that
$$ \lambda_a(E) = \int_E h \ d\mu \, . $$
\end{description}
\end{theorem}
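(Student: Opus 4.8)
The plan is to follow the classical von Neumann Hilbert-space argument, which is precisely the route taken in the cited reference. First I would reduce the general statement to the case of two \emph{finite positive} measures. Since $\mu$ is $\sigma$-finite, $X$ can be written as a countable disjoint union of sets of finite $\mu$-measure; proving the assertion on each piece and invoking countable additivity (together with the uniqueness claims, which make the pieces glue consistently) recovers the $\sigma$-finite case, and the resulting density lies in $L^1(\mu)$ because $\int_X|h|\,d\mu=|\lambda_a|(X)\le|\lambda|(X)<\infty$. Moreover, a complex measure $\lambda$ has finite total variation, so decomposing it into real and imaginary parts and each of these, via the Jordan decomposition, into a difference of finite positive measures reduces the problem by linearity to a single finite positive $\lambda$ tested against a finite positive $\mu$.

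For finite positive $\lambda$ and $\mu$, set $\varphi=\lambda+\mu$, a finite positive measure, and consider the map $f\mapsto\int_X f\,d\lambda$ on $L^2(\varphi)$. By Cauchy--Schwarz and $\lambda\le\varphi$ this is a bounded linear functional, so the Riesz representation theorem furnishes $g\in L^2(\varphi)$ with $\int_X f\,d\lambda=\int_X fg\,d\varphi$ for every $f\in L^2(\varphi)$. The first key step is to establish $0\le g\le 1$ $\varphi$-almost everywhere: testing this identity against the indicator functions $\chi_{\{g<0\}}$ and $\chi_{\{g>1\}}$ and using that $\lambda$ is positive with $\lambda\le\varphi$ forces both sets to be $\varphi$-null.

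Rewriting the identity as $\int_X f(1-g)\,d\lambda=\int_X fg\,d\mu$, I would split $X=A\cup B$ with $A=\{g<1\}$ and $B=\{g=1\}$. Testing with $\chi_B$ makes the left side vanish and yields $\mu(B)=0$, so $\lambda_s(E):=\lambda(E\cap B)$ satisfies $\lambda_s\perp\mu$ and $\lambda_a(E):=\lambda(E\cap A)$ is the remaining part. To identify $\lambda_a$, test with $f=\chi_{E\cap A}\,(1+g+\cdots+g^n)$; the telescoping leaves $\int_{E\cap A}(1-g^{n+1})\,d\lambda=\int_{E\cap A}(g+\cdots+g^{n+1})\,d\mu$. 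Letting $n\to\infty$ and using $g<1$ on $A$, dominated convergence on the left and monotone convergence on the right give $\lambda_a(E)=\int_E h\,d\mu$ with $h:=\chi_A\,g/(1-g)\ge 0$. Hence $\lambda_a\ll\mu$ and $h$ is the required Radon--Nikodym derivative.

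For uniqueness, if $\lambda=\lambda_a'+\lambda_s'$ is another admissible decomposition, then $\lambda_a-\lambda_a'=\lambda_s'-\lambda_s$ is simultaneously absolutely continuous and singular with respect to $\mu$, hence identically zero; and two functions of $L^1(\mu)$ with equal integrals over every measurable set agree $\mu$-almost everywhere, giving uniqueness of $h$. I expect the main obstacle to be the limiting argument producing $h$: one must control the integrability of $g/(1-g)$, and this is exactly where the strict inequality $g<1$ on $A$, the monotone convergence theorem, and the finiteness of $\lambda$ are jointly essential.
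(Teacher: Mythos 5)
Your proof is correct and is exactly the classical von Neumann Hilbert-space argument; the paper offers no proof of its own here, since it quotes this statement as Theorem 6.10 of Rudin's \emph{Real and Complex Analysis}, whose proof is the very argument you reproduce (Riesz representation on $L^2(\varphi)$, the bound $0\le g\le 1$, the split $A=\{g<1\}$, $B=\{g=1\}$, and the geometric-series test functions). The only cosmetic difference is in handling $\sigma$-finiteness: Rudin absorbs it into the Hilbert-space step via a weight $w\in L^1(\mu)$ with $0<w<1$, working with $d\varphi=d\lambda+w\,d\mu$, whereas you partition $X$ into pieces of finite $\mu$-measure and glue using uniqueness; both are standard and valid.
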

The choice  $\mu=|\lambda|$ gives the following theorem~\cite[th. 6.12]{Rudin1987}.
\begin{theorem}\label{Th_dlambda}
Let $\lambda$ be a complex measure on a $\sigma$-algebra $\mathcal M$ in $X$. Then there is a function  $h\in L^1(|\lambda|)$, called the Radon-Nikodym derivative w.r.t. $|\lambda|$, such that $|h(x)| =1$ for all $x \in X$ and such that 
$ \lambda(E) = \int_E h \ d|\lambda|$ or, equivalently, that $d\lambda= h\ d|\lambda|$. 
\end{theorem}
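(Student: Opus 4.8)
The plan is to obtain $h$ directly from the Lebesgue--Radon--Nikodym theorem already stated as Theorem~\ref{Lebesbue-Radon-Nikodym}, and then to prove separately that the resulting density has modulus one almost everywhere. First I would observe that $|\lambda|$ is a positive (finite) measure on $\mathcal M$ and that $\lambda$ is absolutely continuous with respect to it: if $|\lambda|(E)=0$ then $\lambda(E)=0$, since $|\lambda(E)|\le|\lambda|(E)$ for every $E$, as is seen by taking the trivial one-set partition $\{E\}$ of $E$ in the definition of the total variation. Applying Theorem~\ref{Lebesbue-Radon-Nikodym} with $\mu=|\lambda|$ then yields a unique $h\in L^1(|\lambda|)$ with $\lambda(E)=\int_E h\,d|\lambda|$ for all $E\in\mathcal M$, which is exactly the asserted representation $d\lambda=h\,d|\lambda|$. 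It remains only to prove that $|h|=1$ almost everywhere.

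For the lower bound I would fix $0<r<1$, set $A_r=\{x:|h(x)|<r\}$, and estimate, for any measurable partition $\{E_k\}$ of $A_r$,
$$
\sum_k|\lambda(E_k)|=\sum_k\left|\int_{E_k}h\,d|\lambda|\right|\le\sum_k\int_{E_k}|h|\,d|\lambda|\le r\sum_k|\lambda|(E_k)=r\,|\lambda|(A_r).
$$
Taking the supremum over all partitions gives $|\lambda|(A_r)\le r\,|\lambda|(A_r)$, whence $|\lambda|(A_r)=0$; since $\{|h|<1\}=\bigcup_{n}A_{1-1/n}$ is a countable union of null sets, this shows $|h|\ge1$ almost everywhere. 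For the upper bound I would use that, for every $E$ with $|\lambda|(E)>0$, the average of $h$ satisfies
$$
\left|\frac{1}{|\lambda|(E)}\int_E h\,d|\lambda|\right|=\frac{|\lambda(E)|}{|\lambda|(E)}\le1,
$$
so all such averages lie in the closed unit disk.

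The main obstacle --- and the only genuinely non-elementary step --- is to pass from \emph{``all averages of $h$ over sets of positive measure lie in the closed unit disk''} to \emph{``$h$ itself takes values in the closed unit disk almost everywhere''}. I would invoke the standard averaging lemma from Rudin~\cite{Rudin1987}: if the averages of an integrable function over every set of positive measure lie in a fixed closed set, then the function lies in that set almost everywhere. The argument covers the complement of the closed unit disk by countably many closed sub-disks and shows each has null preimage; indeed, if a small closed disk $\overline{B}(c,\rho)$ disjoint from the unit disk had preimage $E$ with $|\lambda|(E)>0$, the average of $h$ over $E$ would lie within $\rho$ of $c$, hence outside the unit disk, contradicting the displayed bound. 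This yields $|h|\le1$ almost everywhere, and combined with the lower bound gives $|h|=1$ almost everywhere.

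Finally I would redefine $h$ to equal $1$ on the $|\lambda|$-null set where $|h|\ne1$; this alters no integral $\int_E h\,d|\lambda|$, so the representation formula $\lambda(E)=\int_E h\,d|\lambda|$ persists, while now $|h(x)|=1$ holds for \emph{every} $x\in X$, completing the proof.
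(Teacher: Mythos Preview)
Your argument is correct and is, in fact, precisely the proof given in Rudin~\cite[Th.~6.12, together with Th.~1.40]{Rudin1987}. The paper itself supplies no proof of this theorem: it is stated in the appendix purely as a quotation from Rudin, so there is nothing further to compare.
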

From Theorem~\ref{Th_dlambda} it follows that $\lambda(X)  = \int_X h d|\lambda|$; furthermore, it is possible to define 
$$ \int_X f \ d\lambda \stackrel{def}{=} \int_X f h \ d|\lambda| \, .$$
Later on, we consider a special case. Let $\mu$ be a positive real measure on $\mathcal M$ and let $g: X \rightarrow \mathbb C$ be a function in $L^1(\mu)$. We can define a complex measure $\lambda$ on  $\mathcal M$ in the set $X$ as follows:
$$\lambda(E)=\int_E g \ d\mu \quad \text{ for all } \, E \in \mathcal M \, .$$
Since in this case $d\lambda= g \ d\mu$, from Theorem~\ref{Th_dlambda} we have $g \ d\mu = d\lambda = h\  d|\lambda|$, with $|h|=1$, and so  $\overline h g \ d\mu = \overline h h d|\lambda| = d|\lambda|$. We conclude that, in this case,
 \begin{eqnarray}\label{integrale}
\int_X f \ d\lambda =\int_X f h \ d|\lambda|  = \int_X f h \overline h g \ d\mu = \int_X f g |h|^2 \ d\mu = \int_X f g \ d\mu \, .
 \end{eqnarray}


\bibliography{FassinoRiccomagnoRogantin._JAS_paper}
\bibliographystyle{plain}
\end{document}